\documentclass[11pt]{amsart}
\usepackage[margin=1in]{geometry}

\usepackage{amssymb}
\usepackage{amsthm}
\usepackage{amsmath}
\usepackage{mathrsfs}
\usepackage{amsbsy}
\usepackage[all]{xy}
\usepackage{bm}
\usepackage{hyperref}
\usepackage{tikz}
\usepackage{array}
\usepackage{float}
\usepackage{enumerate}
\usepackage{xcolor}
\usepackage{hhline}
\setlength{\parskip}{0em}
\allowdisplaybreaks
\usepackage[noadjust]{cite}

\usepackage{caption}
\usepackage{subcaption}
\usepackage{tabu}
\usepackage{diagbox}
\usepackage{tikz}
\usepackage{bbm}
\usepackage{booktabs}

\DeclareFontFamily{U}{rcjhbltx}{}
\DeclareFontShape{U}{rcjhbltx}{m}{n}{<->rcjhbltx}{}
\DeclareSymbolFont{hebrewletters}{U}{rcjhbltx}{m}{n}

% remove the definitions from amssymb
\let\aleph\relax\let\beth\relax

\DeclareMathSymbol{\aleph}{\mathord}{hebrewletters}{39}
\DeclareMathSymbol{\beth}{\mathord}{hebrewletters}{98}

\usepackage[noabbrev,capitalise]{cleveref}

\newenvironment{enumerate*}%
  {\begin{enumerate}[(I)]%
    \setlength{\itemsep}{10pt}%
    \setlength{\parskip}{0pt}}%
  {\end{enumerate}}

\newtheorem{theorem}{Theorem}[section]
\newtheorem{proposition}[theorem]{Proposition}

\newtheorem{lemma}[theorem]{Lemma}

\theoremstyle{definition}

\title{Relative sizes of iterated sumsets}

\author[]{Noah Kravitz}
\address[]{Department of Mathematics, Princeton University, Princeton, NJ 08540, USA}
\email{nkravitz@princeton.edu}

\begin{document}

\maketitle

\begin{abstract}
Let $hA$ denote the $h$-fold sumset of a subset $A$ of an abelian group.  Resolving a problem of Nathanson, we show that for any prescribed permutations $\sigma_1, \ldots, \sigma_H \in \mathfrak{S}_n$, there exist finite subsets $A_1, \ldots, A_n \subseteq \mathbb{Z}$ such that for each $1 \leq h \leq H$, the relative order of the quantities $|h A_1|, \ldots, |h A_n|$ is given by $\sigma_h$.  We also establish extensions where $\mathbb{Z}$ is replaced by any other infinite abelian group or where one prescribes some equalities (not only inequalities) among the sumset sizes.
\end{abstract}

\section{Introduction}
For a natural number $h$ and a subset  $A$ of an abelian group, let
$$hA:=\{a_1+\cdots+a_h: a_1, \ldots, a_h \in A\}$$
denote the \emph{$h$-fold sumset} of $A$.  The quantitative growth of the sequence $|A|,|2A|,|3A|, \ldots$ is controlled by tools such as the Pl\"unnecke--Ruzsa Inequality (see, e.g., \cite{petridis}).  

Nathanson~\cite{nathanson} recently posed a suite of more qualitative questions about the possible \emph{relative} growth rates of such sequences for different choices of $A \subseteq \mathbb{Z}$.  For subsets $A_1, \ldots, A_n \subseteq \mathbb{Z}$ and a natural number $h$, one can consider the relative order of the quantities
$$|hA_1|, |hA_2|, \ldots, |hA_n|.$$
If these quantities are all distinct, then there is a unique permutation $\sigma \in \mathfrak{S}_n$ which (when written in $1$-line notation) has the same relative order as $|hA_1|, |hA_2|, \ldots, |hA_n|$.

Nathanson asked if for prescribed permutations $\sigma_1, \ldots, \sigma_R \in \mathfrak{S}_n$, one can always find an increasing sequence $h_1<\cdots<h_R$ of natural numbers and finite subsets $A_1, \ldots, A_n \subseteq \mathbb{Z}$ such that $|h_rA_1|, \ldots, |h_r A_n|$ has the same relative order as $\sigma_r$ for each $1 \leq r \leq R$.  Nathanson further asked if one can prescribe the sequence $h_1<\cdots<h_R$ in addition to the permutations $\sigma_1, \ldots, \sigma_R$; note that an affirmative answer for $(h_1, h_2, \ldots, h_R)=(1,2,\ldots, R)$ (with $R$ arbitrary) would imply an affirmative answer in general.  Our main result establishes precisely this fact, not only in the integers but in any sufficiently large abelian group.

\begin{theorem}\label{thm:main}
Let $n,H \in \mathbb{N}$.  Then for every sufficiently large abelian group $G$ and any permutations $\sigma_1, \ldots, \sigma_H \in \mathfrak{S}_n$, there exist finite subsets $A_1, \ldots, A_n \subseteq G$ such that
$$|hA_1|, \ldots, |hA_n| \quad \text{has the same relative order as $\sigma_h$}$$
for each $1 \leq h \leq H$.
\end{theorem}

This theorem is optimal in the sense that one cannot hope to control the relative sizes of the quantities $|hA_k|$ for infinitely many values of $h$, even along a sparse sequence.  Indeed, a result of Khovanskii~\cite{khov1,khov2} (see also~\cite{nathanson3,nathanson2,NR}) shows that for any finite subset $A \subseteq G$, the quantity $|hA|$ is eventually a polynomial function of $h$; hence the relative order of $|h A_1|, \ldots, |hA_n|$ is the same for all sufficiently large $h$.

We also remark that the conclusion of \Cref{thm:main} fails unless the group $G$ is sufficiently large in terms of both $n$ and $H$. For instance, if $2^{|G|}<n$, then the $A_k$'s cannot all be distinct.  More subtly, for any $A \subseteq G$, the quantity $|hA|$ is constant for all $h \geq |G|$, so the relative order of $|hA_1|, \ldots, |hA_n|$ is the same for all $h \geq |G|$; this shows that the conclusion of \Cref{thm:main} fails for $|G|<H$.

One could hope to strengthen \Cref{thm:main} by prescribing more conditions.  First, one could ask about prescribing equalities (in addition to inequalities) among the iterated sumset sizes.  Second, recall that the relative order of $h$-fold iterated sumsets is eventually constant for sufficiently large $h$.  One could ask about prescribing this ``limiting'' relative order in addition to the relative orders for the first several iterated sumsets.  In the integer setting, we can prove an extension of \Cref{thm:main} that makes both of these improvements.

\begin{theorem}\label{thm:extension}
Let $n,H \in \mathbb{N}$.  Then for any tuples $\tau_1, \ldots, \tau_H, \tau_\infty \in \mathbb{N}^n$, there exist finite subsets $A_1, \ldots, A_n \subseteq G$ such that
$$|hA_1|, \ldots, |hA_n| \quad \text{has the same relative order as $\tau_h$}$$
for each $1 \leq h \leq H$ and $$|hA_1|, \ldots, |hA_n| \quad \text{has the same relative order as $\tau_\infty$}$$
for all $h>H$.
\end{theorem}
We leave it as an open problem to establish the analogous result in all infinite abelian groups.

\subsection{Proof strategy and paper outline}
The construction for \Cref{thm:main} is based on combinations of basic ``building blocks''.  For each $h$ we construct a family of $n$ building blocks whose $h$-fold iterated sumsets have distinct sizes but whose $h'$-fold iterated sumsets have the same size for each $h'>h$.  We then construct the sets $A_k$ as suitable Cartesian products of the building blocks.

We carry out this strategy in \Cref{sec:proofs}: We show that it suffices to prove \Cref{thm:main} for the two ``model cases'' $G=\mathbb{Z}$ and $G=(\mathbb{Z}/p\mathbb{Z})^N$, then we give a precise description of the necessary properties of our building blocks, and finally we show how to construct the building blocks in each model case.  In \Cref{sec:extension} we adapt the proof of \Cref{thm:main} in order to prove \Cref{thm:extension}.  In \Cref{sec:old-construction} we describe an alternative construction for a weaker version of \Cref{thm:main} in the integer setting; the proof introduces multiscale arguments that may be of independent interest.

\section{Proof of the main theorem}\label{sec:proofs}

\subsection{Reduction to model cases}\label{sec:reduction}
We begin by reducing \Cref{thm:main} to the following two propositions, whose proofs occupy the remainder of this section.

\begin{proposition}\label{prop:positive}
Let $p$ be a prime, and let $n,H \in \mathbb{N}$.  Then there is some $N_p=N_p(n,H) \in \mathbb{N}$ such that for any permutations $\sigma_1, \ldots, \sigma_H \in \mathfrak{S}_n$, there exist subsets $A_1, \ldots, A_n \subseteq (\mathbb{Z}/p\mathbb{Z})^{N_p}$ such that
$$|hA_1|, \ldots, |hA_n| \quad \text{has the same relative order as $\sigma_h$}$$
for each $1 \leq h \leq H$.
\end{proposition}

For integers $M \leq N$, write $[M,N]:=\{M, M+1,\ldots, N\}$.  For $N \in \mathbb{N}$, write $[N]:=[0,N]$.

\begin{proposition}\label{prop:zero}
Let $n,H \in \mathbb{N}$.  Then there is some $N_\infty=N_\infty(n,H) \in \mathbb{N}$ such that for any permutations $\sigma_1, \ldots, \sigma_H \in \mathfrak{S}_n$, there exist subsets $A_1, \ldots, A_n \subseteq [N_\infty] \subseteq \mathbb{Z}$ such that
$$|hA_1|, \ldots, |hA_n| \quad \text{has the same relative order as $\sigma_h$}$$
for each $1 \leq h \leq H$.
\end{proposition}

\begin{proof}[Proof of \Cref{thm:main}, assuming Propositions~\ref{prop:positive} and~\ref{prop:zero}]
Fix $n,H \in \mathbb{N}$ and $\sigma_1, \ldots, \sigma_H \in \mathfrak{S}_n$, as in the statement of \Cref{thm:main}.  Take $N_p=N_p(n,H), N_\infty=N_\infty(n,H)$ as in the previous two propositions.  Let $p_1, \ldots, p_s$ be the primes up to $HN_\infty$.

Suppose that $G$ is an abelian group of size at least
$$N:=(HN_\infty)^{\sum_{i=1}^s N_{p_i}}.$$
If $G$ has an element $x$ of order larger than $HN_\infty$, then \Cref{prop:zero} lets us find the desired sets $A_1, \ldots, A_n$ in the set $\{0,x,2x, \ldots, N_\infty x\}$; note that $x$ has large enough order to prevent wrap-around when we take sumsets.  It remains to consider the case where every element of $G$ has order at most $HN_\infty$.  By passing to subgroups if necessary, we may assume that $G$ is finite.  Hence we can write
$$G=\prod_{i=1}^s G_{i},$$
where each $G_i$ is a $p_i$-group (i.e., every element of $G_i$ has order a power of $p_i$).  The Pigeonhole Principle provides some $1 \leq i \leq s$ such that
$$|G_i| \geq (HN_\infty)^{N_{p_i}}$$
The Fundamental Theorem of Finitely Generated Abelian Groups lets us write $G_i$ as a product of cyclic groups $\mathbb{Z}/p_i^j\mathbb{Z}$ with $p_i \leq p_i^j \leq HN_\infty$ (the upper bound due to our assumption that $G$ does not have any elements of order larger than $HN_\infty$).  Hence there are at least $N_{p_i}$ multiplicands in the product, so the $p_i$-torsion subgroup of $G_i$ is isomorphic to $(\mathbb{Z}/p_i\mathbb{Z})^{N'}$ for some $N' \geq N_{p_i}$.  \Cref{prop:positive} now lets us find the desired sets $A_1, \ldots, A_n$ in this subgroup.
\end{proof}

One can extract explicit values of $N_p,N_\infty$ from our proofs of Propositions~\ref{prop:positive} and~\ref{prop:zero}, so the value of $N$ in the proof of \Cref{thm:main} can also be explicitly computed.

\subsection{The general strategy}
In order to highlight the common structure of the proofs of Propositions~\ref{prop:positive} and~\ref{prop:zero}, we describe the general framework here before diving into the details.  The main idea is constructing the sets $A_k$ as $H$-fold Cartesian products, where for each $1 \leq h \leq H$ a different  multiplicand ``dominates'' the sizes of the quantities $|hA_k|$.  One should think of the sets $B_{h,i}$ in the following proposition as the building blocks that will dominate $h$-fold sumsets of our sets $A_k$.

\begin{proposition}\label{prop:putting-together-the-pieces}
Let $n,H \in \mathbb{N}$, and let $G$ be an abelian group.  Suppose that for each $1 \leq h \leq H$ there are $t_h \in \mathbb{N}$ and finite subsets $B_{h,1}, \ldots, B_{h,n} \subseteq G^{t_h}$ such that
\begin{equation}\label{eq:first-condition}
|hB_{h,1}|<|hB_{h,2}|<\cdots<|hB_{h,n}|
\end{equation}
and
\begin{equation}\label{eq:second-condition}
|h'B_{h,1}|=|h'B_{h,2}|=\cdots=|h'B_{h,n}| \quad \text{for all $h'>h$}.
\end{equation}
Then for any permutations $\sigma_1, \ldots, \sigma_H \in \mathfrak{S}_n$, there exist $t \in \mathbb{N}$ and finite subsets  $A_1, \ldots, A_n \subseteq G^t$ such that
$$|hA_1|, \ldots, |hA_n| \quad \text{has the same relative order as $\sigma_h$}$$
for each $1 \leq h \leq H$.
\end{proposition}

\begin{proof}
Let $\mu_1<\cdots<\mu_H$ be a quickly-increasing sequence of natural numbers, to be determined later.  Set $$t:=\mu_1t_1+\cdots+\mu_Ht_H.$$  For each $1 \leq k \leq n$, let $A_k \subseteq G^t$ be the Cartesian product of $\mu_1$ copies of $B_{1,\sigma_1(k)}$, and $\mu_2$ copies of $B_{2,\sigma_2(k)}$, and so on, up to $\mu_H$ copies of $B_{H,\sigma_H(k)}$.  Let us check that iterated sumsets of the sets $A_k$ have the desired relative orders.  Fix $1 \leq h \leq H$.  For each $k$, we have
$$|hA_k|=\prod_{j=1}^H |hB_{j,\sigma_j(k)}|^{\mu_j}= \underbrace{\left(\prod_{j=1}^{h-1} |hB_{j,\sigma_j(k)}|^{\mu_j} \right)}_{(j<h)} \cdot \underbrace{\left( |hB_{h,\sigma_h(k)}|^{\mu_h}\right)}_{(j=h)} \cdot \underbrace{\left(\prod_{j=h+1}^H |hB_{j,\sigma_j(k)}|^{\mu_j} \right)}_{(j>h)}.$$
By assumption, the $j>h$ contribution is the same for all $1 \leq k \leq n$, so we can ignore it.  The $j=h$ contributions are in the desired relative order.  If $\mu_h$ is chosen sufficiently large relative to $\mu_1, \ldots, \mu_{h-1}$, then this contribution will dominate the $j<h$ contribution, and the quantities $|hA_k|$ will have the desired relative order.
\end{proof}

We remark that the $n=2$ hypothesis of \Cref{prop:putting-together-the-pieces} is enough to imply the hypothesis for all $n$: If we have sets $B_{h_1}, B_{h,2}$ satisfying \eqref{eq:first-condition} and \eqref{eq:second-condition}, then the sets $B'_{h,i}:=B_{h,1}^{\times i} \times B_{h,2}^{\times n-i}$ (for $1 \leq i \leq n$) also satisfy \eqref{eq:first-condition} and \eqref{eq:second-condition}.

Now \Cref{prop:positive} (the main theorem for $(\mathbb{Z}/p\mathbb{Z})^N$) is an immediate consequence of \Cref{prop:putting-together-the-pieces} (with $G=\mathbb{Z}/p\mathbb{Z}$) once we find suitable sequences of sets satisfying \eqref{eq:first-condition} and \eqref{eq:second-condition}.  The deduction of \Cref{prop:zero} (the main theorem for $\mathbb{Z}$) is only slightly more involved: \Cref{prop:putting-together-the-pieces} with $G=\mathbb{Z}$ produces subsets of $\mathbb{Z}^t$, and then one can transfer these sets to $\mathbb{Z}$ by applying a suitable Freiman homomorphism of order $H$.

It remains to find sequences of subsets of $G=\mathbb{Z}/p\mathbb{Z}$ and of $G=\mathbb{Z}$ satisfying \eqref{eq:first-condition} and \eqref{eq:second-condition}.

\subsection{Positive characteristic}\label{sec:positive}

Let $p$ be a prime.  Fix $n,H \in \mathbb{N}$, and let $1 \leq h \leq H$.  Our goal is to find $t \in \mathbb{N}$ and finite subsets $B_1, \ldots, B_n \subseteq (\mathbb{Z}/p\mathbb{Z})^t$ satisfying \eqref{eq:first-condition} and \eqref{eq:second-condition}.  (For notational simplicity, we write $t,B_{i}$ instead of $t_h,B_{h,i}$.)

For integers $0 \leq s \leq t$, let $X(s,t)$ denote the subset of $(\mathbb{Z}/p\mathbb{Z})^t$ consisting of all elements with at most $s$ nonzero coordinates.  The key property of the sets $X(s,t)$ is that for any $j \in \mathbb{N}$, we have
$$jX(s,t)=X(\min\{js,t\},t).$$
We record two consequences:
\begin{enumerate}[(i)]
    \item \label{conseq-1}  We have the strict inequality $$|jX(s,t)|=|X(js,t)|<|X(js',t)|=|jX(s',t)|$$ for all $0\leq s<s' \leq t/j$.
    \item \label{conseq-2} We have the identity $|jX(s,t)|=p^t$ for all $j \geq t/s$.
\end{enumerate}

The construction of the sets $B_i$ is now quite simple.  Take natural numbers $t, s_1, \ldots, s_n$ satisfying
$$t/(h+1) \leq s_1<s_2<\cdots<s_n \leq t/h$$
(for instance, one could take $t:=h(h+1)(n-1)$ and $s_i:=h(n-1)+i-1$), and set
$$B_i:=X(s_i,t)$$
for $1 \leq i \leq n$.  Since $hs_n\leq t$, consequence (\ref{conseq-1}) from above gives the string of inequalities
$$|hB_1|<\cdots <|hB_n|;$$
thus the sets $B_i$ satisfy condition \eqref{eq:first-condition}.  Since $(h+1)s_1\geq t$, consequence (\ref{conseq-2}) from above gives that $|h'B_i|=p^t$
is constant for all $1 \leq i \leq n$ and $h'>h$; thus the sets $B_i$ also satisfy condition \eqref{eq:second-condition}.

Applying \Cref{prop:putting-together-the-pieces} with these sets $B_i$ proves \Cref{prop:positive}.

\subsection{Characteristic zero}\label{sec:zero}
Fix $n,H \in \mathbb{N}$, and let $1 \leq h \leq H$.  Our goal is to find finite subsets $B_1, \ldots, B_n \subseteq \mathbb{Z}$ satisfying \eqref{eq:first-condition} and \eqref{eq:second-condition}.  (We again write $B_{i}$ instead of $B_{h,i}$ for brevity.)  Recall that we write $[M,N]:=\{M, M+1,\ldots, N\}$ for integers $M \leq N$.

For integers $0 \leq u \leq v$, let $Y(u,v):=[0,u] \cup [v-u,v] \subseteq \mathbb{Z}$.  For any $j \in \mathbb{N}$, we have
$$jY(u,v)=\bigcup_{\ell=0}^j ((j-\ell) [0,u]+\ell[v-u,v])=\bigcup_{\ell=0}^j [\ell(v-u), \ell(v-u)+j u].$$
We record two consequences:
\begin{enumerate}[(i)]
\setcounter{enumi}{2}
    \item \label{conseq-3} For any $0 \leq u<u'\leq v$, we have the containment $Y(u,v) \subseteq Y(u',v)$, and hence $$jY(u,v) \subseteq jY(u',v)$$ for all $j \in \mathbb{N}$.  If moreover $u'\leq v/(j+1)$, then this last containment is strict because the element $ju'$ is in $jY(u',v)$ but not in $jY(u,v)$.
    \item \label{conseq-4} We have the identity $jY(u,v)=[0,jv]$ for all $j \geq (v-1)/u-1$.
\end{enumerate}

The construction of the sets $B_i$ is again merely a matter of choosing the parameters $u,v$ appropriately.  Take natural numbers $v, u_1, \ldots, u_n$ satisfying
$$(v-1)/(h+2) \leq u_1<u_2<\cdots<u_n \leq v/(h+1)$$
(with much flexibility, as in the previous subsection), and set
$$B_i:=Y(u_i,v)$$
for $1 \leq i \leq n$.  Now consequences (\ref{conseq-3}) and (\ref{conseq-4}) ensure that the sets $B_i$ satisfy conditions \eqref{eq:first-condition} and \eqref{eq:second-condition}, and an application of \Cref{prop:putting-together-the-pieces} proves \Cref{prop:zero}.

\section{Prescribing more conditions}\label{sec:extension}

In this short section we prove \Cref{thm:extension}, which ``upgrades'' \Cref{thm:main} in the integers.  Recall that the two improvements in \Cref{thm:extension} are the ability to prescribe equalities (in addition to inequalities) among the first few iterated sumset sizes and the ability to dictate the limiting relative order of the iterated sumset sizes.  We obtain the first improvement by replacing the building block $Y(u,v)$ with a slightly more complicated set.  A similar trick leads to the second improvement.

\subsection{An improved building block}
Recall that $Y(v,w)$ is the union of two subintervals of $[w]$ each of length $v+1$.  For integers $0 \leq u \leq v \leq w$, let $Z(u,v,w)$ be the set obtained from $Y(v,w)$ by replacing each interval of length $v+1$ by a copy of $Y(u,v)$; more explicitly, define
$$Z(u,v,w):=[0,u] \cup [v-u,v] \cup [w-v,w-v+u] \cup [w-u,w]  \subseteq \mathbb{Z}.$$
Fix $n,h \in \mathbb{N}$.  Take natural numbers $u,v_1,\ldots, v_n,w$ satisfying
$$\max\{(h+1)u,w/(h+2)\})\leq v_1<\cdots<v_n\leq \min\{(h+2)u,w/(h+1)\}$$
(as usual with much flexibility), and set
$$B_i:=Z(u,v_i,w)$$
for $1 \leq i \leq n$.  Arguing as in the previous section, we have:
\begin{equation}\label{eq:Z-inequality}
|hB_1|<\cdots <|hB_n|;
\end{equation}
\begin{equation}\label{eq:Z-small-h}
|jB_1|=\cdots=|jB_n|  \quad \text{for all $j< h$};
\end{equation}
\begin{equation}\label{eq:Z-large-h}
jB_1=\cdots=jB_n=[0,jw]  \quad \text{for all $j> h$}.
\end{equation}
The first and third of these properties already appeared in our analysis of the sets $Y$, and the main novelty here is the equality \eqref{eq:Z-small-h} for $j<h$.

\subsection{Assembling the pieces}

We are now in a position to run a simplified version of the argument from \Cref{prop:putting-together-the-pieces}.\footnote{The full complexity of \Cref{prop:putting-together-the-pieces} is still necessary for our argument in the positive-characteristic setting because we do not know of a way to upgrade the sets $X(s,t)$ to sets enjoying the properties of $Z(u,v,w)$.}

\begin{proof}[Proof of \Cref{thm:extension}]
Fix $n,H \in \mathbb{N}$ and tuples $\tau_1, \ldots, \tau_H, \tau_\infty \in \mathbb{N}^n$ as in the statement of \Cref{thm:extension}.  Without loss of generality, we may assume that $\tau_1,\ldots, \tau_H, \tau_\infty \in [1,n]^n$.  For each $1 \leq h \leq H$, take a sequence of sets $B_{h,1}, \ldots, B_{h,n}$ satisfying \eqref{eq:Z-inequality}, \eqref{eq:Z-small-h}, \eqref{eq:Z-large-h}, as constructed in the previous subsection; for notational simplicity, do so in such a way that the parameter $w$ is the same for all of the $h$'s and is sufficiently large relative to $n,H$. 

For $1 \leq k \leq n$, define the set
$$\tilde A_k:=\prod_{h=1}^H B_{h,\tau_h(k)} \subseteq [w]^H \subseteq \mathbb{Z}^H.$$
For each $1 \leq h \leq H$, the quantities
$$|h\tilde A_1|, \ldots, |h\tilde A_n|$$
are in the desired relative order: The properties \eqref{eq:Z-small-h} and \eqref{eq:Z-large-h} ensure that for each $h' \neq h$ the term $|B_{h',\tau_{h'}(k)}|$ is independent of $k$, and \eqref{eq:Z-inequality} dictates the $h$ contribution.

It remains to transfer this construction to the integers and to handle $h>H$.  To this end, define the map $\varphi: \mathbb{Z}^H \to \mathbb{Z}$ via
$$\varphi(x_1,\ldots, x_H):= x_1+(wH+1)x_2+\cdots+(wH+1)^{H-1}x_H.$$
The map $\varphi$ is a Freiman homomorphism and restricts to a bijection $[wH]^H \to [(wH+1)^H-1]$.  In particular, $|h\varphi(A)|=|hA|$ for every subset $A \subseteq [w]^H$ and natural number $h \leq H$.  For $1 \leq k \leq n$, define the set
$$A_k:=\varphi(\tilde A_k)+\{0,(wH+1)^{H}+\tau_\infty(k)\} \subseteq \mathbb{Z}.$$
For each $1 \leq h \leq H$, the sumset $hA_k$ is a disjoint union of $h+1$ copies of $\varphi(h\tilde A_k)$ (disjointness is ensured by $(wH+1)^{H}+\tau_\infty(k)>(wH+1)^H$), and hence the quantities
$$|hA_1|, \ldots, |hA_n|$$
are in the desired relative order.  We now turn to $h>H$.  For all $1 \leq k \leq n$ we have the identity
$$(H+1)\tilde A_k=\varphi((H+1)\tilde A_k)=\varphi([0,w(H+1)]^H)=[0,(1+1/H)((wH+1)^H-1)].$$
Since this interval is longer than all of the shifts $(wH+1)^{H}+\tau_\infty(k)$ (due to $w$ being sufficiently large), the sumset $hA_k$ is a single long interval starting at $0$ whenever $h>H$.  It follows that for such $h$ the order of the quantities $|hA_1|, \ldots, |hA_n|$ is the same as the order of $\max(A_1), \ldots, \max(A_n)$; this order is given by $\tau_\infty$ because $\max(\varphi(\tilde A_k))=(wH+1)^H-1$ is independent of $k$.
\end{proof}

It could be interesting to find an analogous construction in the positive-characteristic setting.

\section{An alternative approach in the integers}\label{sec:old-construction}

In this section we describe an alternative construction which establishes the following special case of \Cref{thm:main}.

\begin{theorem}\label{thm:old}
Let $n,R \in \mathbb{N}$.  There are natural numbers $h_1<\cdots<h_R$ such that the following holds: For any permutations $\sigma_1, \ldots, \sigma_R \in \mathfrak{S}_n$, there exist finite subsets $A_1, \ldots, A_n \subseteq \mathbb{Z}$ such that
$$|h_rA_1|, \ldots, |h_rA_n| \quad \text{has the same relative order as $\sigma_r$}$$
for each $1 \leq r \leq R$.
\end{theorem}

We prove this theorem in the following three subsections, and in the last subsection we compare it with \Cref{prop:zero} and describe why both constructions are of interest.

\subsection{Preliminary lemmas}
The following simple lemma lets us estimate the size of an $h$-fold iterated sumset of a union of sets.  As usual, let $A+B:=\{a+b: a \in A, b \in B\}$ denote the Minkowski sum, and use the convention $0A=\{0\}$ for any $A$.

\begin{lemma}\label{lem:sumset-of-union}
Let $\ell,h$ be natural numbers, and let $A_1, \ldots, A_\ell$ be nonempty finite subsets of an abelian group each containing the identity.  Then the set $A:=A_1 \cup \cdots \cup A_\ell$ satisfies
\begin{enumerate}[(i)]
    \item $hA \subseteq hA_1+\cdots+hA_\ell$ and in particular $|hA| \leq \prod_{i=1}^\ell |hA_i|$;
    \item $h_1A_1+\cdots+h_\ell A_\ell \subseteq hA$ for any nonnegative integers $h_1, \ldots, h_\ell$ summing to at most $h$.
\end{enumerate}
\end{lemma}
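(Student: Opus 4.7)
For part (i), the plan is to start with an arbitrary element $a_1 + \cdots + a_h \in hA$ and sort the summands according to which $A_i$ they come from: for each $j$ pick an index $i_j$ with $a_j \in A_{i_j}$, and let $h_i := |\{j : i_j = i\}|$ and $S_i := \sum_{j: i_j = i} a_j$ (taken to be $0$ when $h_i = 0$). Then $S_i \in h_i A_i$, and since $0 \in A_i$ we can pad with $h - h_i$ zeros to upgrade this to $S_i \in hA_i$. Summing over $i$ gives $a_1+\cdots+a_h = S_1+\cdots+S_\ell \in hA_1+\cdots+hA_\ell$, which is the first containment. The cardinality bound $|hA| \le \prod_i |hA_i|$ then follows from the trivial bound $|X_1+\cdots+X_\ell| \le \prod |X_i|$ for Minkowski sums.

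For part (ii), I would take a generic element $x_1+\cdots+x_\ell$ of $h_1A_1+\cdots+h_\ell A_\ell$, write each $x_i = a_{i,1}+\cdots+a_{i,h_i}$ with $a_{i,j} \in A_i \subseteq A$, and concatenate these into a list of $h_1+\cdots+h_\ell \le h$ elements of $A$. Padding with $h - (h_1+\cdots+h_\ell)$ copies of $0 \in A$ (valid because $0 \in A_1 \subseteq A$) produces a representation of $x_1+\cdots+x_\ell$ as a sum of exactly $h$ elements of $A$, i.e., as an element of $hA$.

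Neither part presents a real obstacle; the whole content of the lemma is the bookkeeping observation that containing $0$ lets one freely absorb ``extra'' summands, which makes both the upper bound (sort summands by origin and pad each piece up to length $h$) and the lower bound (concatenate and pad the total up to length $h$) go through.
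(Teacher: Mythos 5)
Your proof is correct and is essentially the same argument as the paper's, just unpacked element-by-element: the paper states the identity $hA=\bigcup_{h_1+\cdots+h_\ell=h}\left(h_1A_1+\cdots+h_\ell A_\ell\right)$ and uses the nesting $0A_i\subseteq 1A_i\subseteq\cdots$ (from $0\in A_i$), which is precisely your ``sort summands by origin and pad with zeros'' in compressed form.
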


\begin{proof}
The lemma follows from the identity
$$hA=\bigcup_{h_1+\cdots+h_\ell=h}\left(h_1 A_1+\cdots+h_\ell A_\ell \right)$$
and the fact that $0A_i \subseteq 1A_i \subseteq 2A_i \subseteq \cdots$ for each $i$ (due to $0 \in A_i$).
\end{proof}

In the sequel, where $A$ is a finite set of integers, we will apply Part (i) together with trivial upper bounds of the form $|hA| \leq 1+h(\max(A)-\min(A))$ (from $hA \subseteq [h\min(A),h\max(A)]$).  We will obtain lower bounds from Part (ii) with $h_1=\cdots=h_\ell=\lfloor h/\ell\rfloor$; it will transpire that the sets $h_i A_i$ are ``additively independent to order $h/\ell$'', in a sense that will be let us (iteratively) apply the following lemma.
%and $\left|h_1A^{(1)}+\cdots+h_\ell A^{(\ell)}\right|=\prod_{i=1}^\ell \left| h_iA^{(i)} \right|$, with the latter quantity easy to compute.

\begin{lemma}\label{lem:sumset-full-expansion}
Let $A,B$ be nonempty finite subsets of an abelian group.  If $(A-A) \cap (B-B)=\{0\}$, then $|A+B|=|A| \cdot |B|$.
\end{lemma}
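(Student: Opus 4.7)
The plan is to show that the natural addition map $\varphi: A \times B \to A+B$ defined by $(a,b) \mapsto a+b$ is a bijection. Surjectivity is immediate from the definition of the Minkowski sum, so the content lies entirely in proving injectivity.

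To establish injectivity, I would suppose that $a_1 + b_1 = a_2 + b_2$ for some $a_1, a_2 \in A$ and $b_1, b_2 \in B$, and rearrange to obtain
\[
a_1 - a_2 = b_2 - b_1.
\]
The left-hand side lies in $A - A$ and the right-hand side lies in $B - B$, so this common value belongs to $(A-A) \cap (B-B)$. By hypothesis, this intersection equals $\{0\}$, forcing $a_1 = a_2$ and $b_1 = b_2$. Hence $\varphi$ is injective, and $|A+B| = |A \times B| = |A| \cdot |B|$.

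There is no real obstacle here; the lemma is essentially a restatement of the hypothesis in terms of cardinalities, and the proof is a one-line manipulation. The only thing worth emphasizing (for use in the main argument) is that the hypothesis $(A-A) \cap (B-B) = \{0\}$ is precisely the notion of ``additive independence'' alluded to in the discussion preceding the lemma, which is what will allow iterative application when bounding $|hA|$ from below for unions of scaled progressions.
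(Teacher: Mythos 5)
Your proof is correct and takes essentially the same approach as the paper: both rearrange $a_1+b_1=a_2+b_2$ to $a_1-a_2=b_2-b_1 \in (A-A)\cap(B-B)=\{0\}$ to conclude injectivity of the addition map. The only cosmetic difference is that you phrase it explicitly as a bijection $A\times B\to A+B$, whereas the paper states the injectivity claim directly.
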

\begin{proof}
We must show that if $a_1,a_2 \in A$ and $b_1,b_2 \in B$ satisfy $a_1+b_1=a_2+b_2$, then $a_1=a_2$ and $b_1=b_2$.  The hypothesis rearranges to $a_1-a_2=b_2-b_1$; since this quantity lies in $(A-A) \cap (B-B)$, it must vanish.
\end{proof}

We record that, for sets of integers, the hypothesis of this lemma is satisfied if there is some $X \in \mathbb{N}$ such that $\max(A)-\min(A)<X$ and all pairs of elements of $B$ differ by at least $X$.

%\subsection{Nathanson's original question}
%The construction for \Cref{thm:main} is a bit notation-intensive, so we first present the simpler case corresponding to Nathanson's original question.

%For $N \in \mathbb{N}$, write $[N]:=\{0,1,2,\ldots, N\}$ (including $0$ is notationally convenient but a bit non-standard).  For $A \subseteq \mathbb{Z}$ and $\lambda \in \mathbb{N}$, write $\lambda \cdot A:=\{\lambda a: a \in A\}$ for the dilation of $A$ by $\lambda$.

%\begin{proposition}\label{prop:toy-case}
%There are finite subsets $A,B \subseteq \mathbb{Z}$ and natural numbers $h_1<h_2<h_3$ such that $|h_1A|<|h_1B|$, $|h_2B|<|h_2A|$, and $|h_3A|<|h_3B|$.
%\end{proposition}

\subsection{The main estimate}\label{sec:main-estimate}
For $A \subseteq \mathbb{Z}$ and $\lambda \in \mathbb{N}$, write $\lambda \cdot A:=\{\lambda a: a \in A\}$ for the dilation of $A$ by $\lambda$ (not to be confused with the $\lambda$-fold sumset).

Let $0=\alpha_0<\alpha_1<\cdots<\alpha_d$ be nonnegative integers, and let $\gamma \geq 1$ be a natural number.  Assume that any two $\alpha_i$'s differ either by at most $\gamma-1$ or by at least $\gamma+2$.  Define an equivalence relation $\sim$ on $[d]$ by declaring that $$i \sim j \quad \text{if} \quad |\alpha_i-\alpha_j|\leq \gamma-1$$ and then taking the closure.  Each equivalence class is of the form $C=\{i, i+1, \ldots, j\}$ for some $i \leq j$; write $C_{\min}:=\alpha_i$ and $C_{\max}:=\alpha_j$.  For example, if the $\alpha_i$'s are $0,1,7,9,10,20,30,32$ and $\gamma=3$, then the equivalence classes are $\{0,1\}, \{2,3,4\},\{5\},\{6,7\}$, and the equivalence class $C=\{2,3,4\}$ has $C_{\min}=\alpha_2=7$ and $C_{\max}=\alpha_4=10$.  One should think of $\sim$ as splitting the $\alpha_i$'s into clumps with small gaps, where $\gamma$ determines the ``width'' of the allowed gaps.

The following sumset growth estimate is the main ingredient in the proof of \Cref{thm:old}.

\begin{lemma}\label{lem:sumset-at-scales}
Let $0=\alpha_0<\alpha_1<\cdots<\alpha_d$ be nonnegative integers, and let $\gamma \geq 1$ be a natural number.  Assume that any two $\alpha_i$'s differ either by at most $\gamma-1$ or by at least $\gamma+2$.  Define the equivalence relation $\sim$ as above.  For $M \in \mathbb{N}$, define the set
$$A:=\bigcup_{i=0}^d M^{\alpha_i} \cdot [M]$$
and the parameter $h:=M^\gamma$.  For $M$ large, we have that
$$|h A|\asymp_d \prod_{C \in [d]/\sim} M^{C_{\max}-C_{\min}+\gamma+1}.$$
%where the implied constants in the ``$\asymp$'' depend only on $\alpha_1, \ldots, \alpha_d,\gamma$.
\end{lemma}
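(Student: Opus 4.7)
The plan is to combine Lemma~\ref{lem:sumset-of-union} with an iterated use of Lemma~\ref{lem:sumset-full-expansion}, matching the grouping to the equivalence classes. For each class $C$, set $B_C:=\bigcup_{i \in C} M^{\alpha_i}\cdot [M]$, so that $A=\bigcup_C B_C$ and each $B_C$ contains $0$. Since $\alpha_i \geq C_{\min}$ for every $i \in C$, every element of $B_C$ is divisible by $M^{C_{\min}}$; writing $B_C=M^{C_{\min}}\cdot \tilde B_C$, one has $\tilde B_C \subseteq [0,M^{\delta_C+1}]$, where $\delta_C:=C_{\max}-C_{\min}$. The upper bound is then immediate: Lemma~\ref{lem:sumset-of-union}(i) and the containment $hB_C \subseteq M^{C_{\min}}\cdot [0,hM^{\delta_C+1}]$ give $|hA|\leq \prod_C(hM^{\delta_C+1}+1)\lesssim_d \prod_C M^{\gamma+\delta_C+1}$.

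For the lower bound, let $\ell$ be the number of classes and set $h_C:=\lfloor h/\ell\rfloor$, so $hA \supseteq \sum_C h_C B_C$ by Lemma~\ref{lem:sumset-of-union}(ii). The first step is the within-class claim $|h_C B_C|\gtrsim_d M^{\gamma+\delta_C+1}$. Applying Lemma~\ref{lem:sumset-of-union}(ii) once more to $B_C$ itself, keeping only the two extremal pieces $M^{C_{\min}}\cdot [M]$ and $M^{C_{\max}}\cdot [M]$, one finds
$$h_C B_C \supseteq M^{C_{\min}}\bigl(\lfloor h_C/2\rfloor\cdot [M]+\lfloor h_C/2\rfloor\cdot M^{\delta_C}\cdot [M]\bigr).$$
The inner sum is the union, over $b\in [\lfloor h_C/2\rfloor M]$, of intervals $[M^{\delta_C}b,\,M^{\delta_C}b+\lfloor h_C/2\rfloor M]$; consecutive intervals merge as soon as $\lfloor h_C/2\rfloor M \geq M^{\delta_C}$, which holds for $M$ large because $\delta_C\leq \gamma-1$. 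The merged interval has length $\asymp M^{\gamma+\delta_C+1}$, which proves the claim.

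The second step is to decouple the classes via Lemma~\ref{lem:sumset-full-expansion}. Order the classes by increasing $C_{\min}$ and let $C'$ denote the most recently added class. The partial sum accumulated so far is contained in $[0,\ell M^{\gamma+C'_{\max}+1}]$ and hence has diameter $\lesssim_d M^{\gamma+C'_{\max}+1}$, while every element of the next summand $h_{C''}B_{C''}$ is a multiple of $M^{C''_{\min}}$ with $C''_{\min}\geq C'_{\max}+\gamma+2$ (by the hypothesis on gaps between $\alpha_i$'s in different classes). For $M$ large (depending on $d$), we have $M^{C''_{\min}}>\ell M^{\gamma+C'_{\max}+1}$, so the two difference sets intersect only at $0$ and Lemma~\ref{lem:sumset-full-expansion} multiplies the sizes. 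Iterating over all classes yields $|\sum_C h_CB_C|=\prod_C |h_CB_C|\gtrsim_d \prod_C M^{\gamma+\delta_C+1}$, which matches the upper bound.

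The main obstacle is the within-class lower bound: the scales $M^{\alpha_i}$ inside a single class differ by factors of at most $M^{\gamma-1}$, strictly smaller than $h=M^\gamma$, so Lemma~\ref{lem:sumset-full-expansion} cannot decouple the pieces inside a class. What makes the argument work is that this very ``insufficient separation'' forces the two-piece sum above to collapse into a single filled interval of the expected length $\asymp M^{\gamma+\delta_C+1}$; this is the mechanism producing the $\delta_C$ in the exponent, and it is the only place where the precise value of $\gamma$ (rather than merely $\gamma\to\infty$) enters.
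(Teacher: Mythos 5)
Your upper bound and your inter-class decoupling step are both sound and match the paper's structure closely; the flaw is in the within-class lower bound. You assert that $\delta_C = C_{\max}-C_{\min} \leq \gamma-1$, but this is false in general: the hypothesis (and the definition of $\sim$) only guarantees that \emph{adjacent} exponents inside a class differ by at most $\gamma-1$, and since the relation is taken as a transitive closure, a class of $m$ elements can have $\delta_C$ as large as $(m-1)(\gamma-1)$. For a concrete example, take $\gamma \geq 4$ and $\alpha_i = i(\gamma-1)$ for $i=0,\ldots,d$: every pair differs by $\gamma-1$ (adjacent) or by at least $2(\gamma-1)\geq \gamma+2$ (non-adjacent), so the hypothesis holds and all indices form a single class with $\delta_C = d(\gamma-1)$.

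When $\delta_C > \gamma+1$, your two-extremal-pieces argument collapses. The intervals $[M^{\delta_C}b,\,M^{\delta_C}b+\lfloor h_C/2\rfloor M]$ no longer merge (the gap $M^{\delta_C}$ exceeds the interval length $\asymp M^{\gamma+1}$), and the resulting set has only $\asymp M^{2(\gamma+1)}$ elements, which is far below the target $M^{\gamma+\delta_C+1}$. The missing idea is that you must use \emph{all} the scales inside $C$ to bridge the spread, not just the two extremes. The paper does this by a telescoping argument: starting from $h'(M^{\alpha_i}\cdot[M]) = M^{\alpha_i}\cdot[h'M]$, one notes $h'M\,M^{\alpha_i} \asymp M^{\alpha_i+\gamma+1} > M^{\alpha_{i+1}}$ (using only the \emph{adjacent} gap bound $\alpha_{i+1}-\alpha_i \leq \gamma-1$), so adding the next scale $h'(M^{\alpha_{i+1}}\cdot[M])$ extends the filled progression to $M^{\alpha_i}\cdot[h'M^{\alpha_{i+1}-\alpha_i+1}]$; iterating across the class yields $M^{\alpha_i}\cdot[h'M^{\alpha_j-\alpha_i+1}]$ of the required size. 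Your closing remark correctly identifies where the precise size of $\gamma$ enters, but the mechanism is the step-by-step bridging across adjacent scales, not a single merge across the entire class.
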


\begin{proof}
We begin with the upper bound.  For each $C \in [d]/\sim$, define the set
$$\omega(C):=\sum_{i \in C} h(M^{\alpha_i} \cdot [M])$$
(with $\sum$ denoting Minkowski sum).  From the trivial inclusion
$$
\omega(C) \subseteq [d M^{C_{\max}+\gamma+1}]$$
and the fact that every element of $\omega(C)$ is an integer multiple of $M^{C_{\min}}$, we see that
$$|\omega(C)|\ll_d M^{C_{\max}-C_{\min}+\gamma+1}.$$
Applying \Cref{lem:sumset-of-union}(i) and taking a product over all of the equivalence classes, we conclude that
$$|h A|\ll_d \prod_{C \in [d]/\sim} M^{C_{\max}-C_{\min}+\gamma+1},$$
as desired.

We now turn to the lower bound.  Let $h':=\lfloor h/d\rfloor$, and set
$$\omega'(C):=\sum_{i \in C} h'(M^{\alpha_i} \cdot [M]).$$  Recall that every element of $\omega'(C)$ is a multiple of $M^{C_{\min}}$ and in particular distinct elements of $\omega'(C)$ differ by at least $M^{C_{\min}}$.  Recall also the trivial inclusion $\omega'(C) \subseteq [M^{C_{\max}+\gamma+1}]$.  We now use \Cref{lem:sumset-full-expansion} and iterative applications of \Cref{lem:sumset-full-expansion} (see the remark following that lemma, and recall the definition of $\sim$) to conclude that
$$|hA|\geq \prod_{C \in [d]/\sim} |\omega'(C)|.$$
It remains to show that
$$|\omega'(C)| \gg_d M^{C_{\max}-C_{\min}+\gamma+1}$$
for each $C$.  Write $C=\{i,i+1, \ldots, j\}$, with $C_{\min}=\alpha_i$ and $C_{\max}=\alpha_j$.  We expand the sumset representation of $\omega'(C)$ term-by-term.  We start with
$$h' (M^{\alpha_i} \cdot [M])= M^{\alpha_i} \cdot [h'M].$$
The definition of the equivalence relation $\sim$ ensures that $M^{\alpha_i}h'M\asymp_d M^{\alpha_i+1+\gamma}>M^{\alpha_{i+1}}$.  Hence
$$h' (M^{\alpha_i} \cdot [M])+h' (M^{\alpha_{i+1}} \cdot [M])= M^{\alpha_i} \cdot [h'M+h'M^{\alpha_{i+1}-\alpha_i+1}] \supseteq M^{\alpha_i} \cdot [h'M^{\alpha_{i+1}-\alpha_i+1}].$$
Continuing in this fashion, we obtain
$$\omega'(C)\supseteq M^{\alpha_i} \cdot [h'M^{\alpha_{j}-\alpha_i+1}].$$
Unraveling the definitions, we find that the set on the right-hand side has size
$$1+h'M^{C_{\max}-C_{\min}+1}\gg_d M^{C_{\max}-C_{\min}+\gamma+1};$$
this completes the proof.
\end{proof}

\subsection{Proof of \Cref{thm:old}}
Our construction for \Cref{thm:old} will use sets of the form analyzed in \Cref{lem:sumset-at-scales}.  The parameter $M$ will be a large natural number whose exact value is unimportant.  The important point is picking the ``scales'' $\alpha_i$ appropriately so that they can be satisfactorily ``grouped together'' by various values of $\gamma$.  There is substantial flexibility in executing this strategy (especially regarding numerics).  Unfortunately there is also a fair bit of unavoidable notation.

Fix natural numbers $n,R$ and permutations $\sigma_1, \ldots, \sigma_R \in \mathfrak{S}_n$, as in the statement of \Cref{thm:old}.   For each $1 \leq k \leq n$, we will construct an increasing sequence of nonnegative integers
\begin{equation}\label{eq:sequences}
0=\alpha_{k,0}<\alpha_{k,1}<\alpha_{k,2}<\cdots<\alpha_{k,d}
\end{equation}
(for $d$ some constant depending on $R$).  We will also construct a sequence of natural numbers
$$\gamma_1<\cdots<\gamma_R.$$
Our sequences will be compatible in the following sense.  Fix any $1 \leq r \leq R$.  For each $1 \leq k \leq n$, the sequence in \eqref{eq:sequences} will have the property that adjacent elements never differ by $\gamma_r$ or $\gamma_{r}+1$, so we can define an equivalence relation $\sim^{r,k}$ on $[d]$, with width parameter $\gamma_r$, as in the beginning of \Cref{sec:main-estimate}.  Recall that for $C=\{i,i+1,\ldots, j\} \in [d]/\sim^{r,k}$,  we write $C_{\min}=\alpha_{k,i}$ and $C_{\max}=\alpha_{k,j}$.  Consider the quantities
$$E(r,k):=\sum_{C \in [d]/\sim^{r,k}} \left(C_{\max}-C_{\min}+\gamma_r+1\right),$$
which resemble the exponents from \Cref{lem:sumset-at-scales}.  The remaining task is choosing the parameters $\alpha_{k,i},\gamma_r$ so that the $E(r,k)$'s have the desired relative order for each $r$.

\begin{proposition}\label{prop:main-construction}
Let $n,R \in \mathbb{N}$, and let $\sigma_1, \ldots, \sigma_R \in \mathfrak{S}_n$ be permutations.  Then there exist sequences $0=\alpha_{k,0}<\alpha_{k,1}<\cdots<\alpha_{k,d}$ (for $1 \leq k \leq n$) and $\gamma_1<\cdots<\gamma_R$ as above such that
$$E(r,1), \ldots, E(r,n) \quad \text{has the same relative order as $\sigma_r$}$$
for each $1 \leq r \leq R.$
\end{proposition}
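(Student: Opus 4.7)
The plan is to construct each sequence $\alpha_{k,\cdot}$ as the partial sums of $d$ gaps drawn from a palette of sizes $\{g_0, g_1, \ldots, g_R\}$ with $g_0 := 1$ and $g_{r'} := \gamma_{r'}+2$ for $r' \geq 1$; I will refer to a gap of size $g_{r'}$ as having \emph{level} $r'$. The thresholds $\gamma_1 < \cdots < \gamma_R$ will be chosen inductively, with each $\gamma_r$ large compared to $d \cdot g_{r-1}$, so that any sum of consecutive gaps (hence any difference $\alpha_{k,j} - \alpha_{k,i}$) is either at most $\gamma_r - 1$ (if all its summands have level $< r$) or at least $\gamma_r + 2$ (if it contains at least one gap of level $\geq r$). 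This simultaneously ensures the hypothesis required at the start of \Cref{sec:main-estimate} for every $r$, and at scale $\gamma_r$ the equivalence classes of $\sim^{r,k}$ are separated precisely by the gaps of level $\geq r$.

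If sequence $k$ contains $N_{r'}(k)$ gaps of level $r'$, with $\sum_{r'=0}^{R} N_{r'}(k) = d$, then a direct count of the classes of $\sim^{r,k}$ and their widths yields
$$E(r,k) = (\gamma_r + 1)\bigl(1 + M_r(k)\bigr) + \sum_{r' < r} N_{r'}(k)\, g_{r'}, \qquad M_r(k) := \sum_{r' \geq r} N_{r'}(k).$$
The second (``correction'') term is at most $d \cdot g_{r-1}$, which is negligible compared to $\gamma_r + 1$, so the relative order of $E(r, \sigma_r(k))$ in $k$ will match the relative order of $M_r(\sigma_r(k))$ in $k$.

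The problem thus reduces to designing integers $M_r(k) \geq 0$ satisfying (i) $M_r(k) \geq M_{r+1}(k)$ pointwise, with the convention $M_{R+1} \equiv 0$, so that the differences $N_{r'}(k) := M_{r'}(k) - M_{r'+1}(k)$ are nonnegative; (ii) $M_r \circ \sigma_r$ is strictly increasing for each $r$; and (iii) $M_1(k) \leq d$ for some fixed $d$, so that $N_0(k) := d - M_1(k)$ is a nonnegative padding keeping every sequence of length exactly $d$. I would take $M_r(k) := n^{R-r}\sigma_r^{-1}(k)$ and $d := n^R$: condition (i) follows from $n\sigma_r^{-1}(k) \geq n \geq \sigma_{r+1}^{-1}(k)$, condition (ii) is immediate since $M_r(\sigma_r(k)) = n^{R-r} k$, and condition (iii) holds because $M_1(k) \leq n^{R-1} \cdot n = n^R$.

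The main (mild) obstacle is the interplay between the correction term and the forbidden-gap condition, but both are handled at once by choosing $\gamma_r$ inductively in terms of $\gamma_{r-1}$. Taking, for instance, $\gamma_r \geq 5 d g_{r-1}$ makes the total variation of the correction across $k$ strictly smaller than the minimum leading-term step $(\gamma_r + 1) n^{R-r}$ and also forces every sum of at most $d$ level-$<r$ gaps to lie strictly below $\gamma_r - 1$. I do not foresee any further difficulties.
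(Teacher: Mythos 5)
Your proof is correct, and it takes a genuinely different route from the paper's. The paper constructs each sequence $(\alpha_{k,i})_{i=0}^d$ as a $2\times\cdots\times 2$ generalized arithmetic progression with $d = 2^R-1$, taking $\gamma_r = (10n)^{10r}$ and generators $\sigma_s^{-1}(k)\gamma_s/(10n)$; the $\sim^{r,k}$-classes are then indexed by the ``high'' generators $s>r$ and the class widths by the ``low'' ones, and $E(r,k)$ comes out in a closed form whose $s=r$ summand dominates. You instead realize each sequence as the partial sums of $d = n^R$ gaps from a fixed palette, with level-$r'$ gaps of size $g_{r'}=\gamma_{r'}+2$, which makes the $\sim^{r,k}$-classes literally ``the blocks between level-$\geq r$ gaps.'' Your resulting identity
$$E(r,k) = (\gamma_r+1)\bigl(1+M_r(k)\bigr) + \sum_{r'<r}N_{r'}(k)\,g_{r'}$$
is exact and cleanly separates the dominant class-count term from a uniformly small correction, and the reduction to choosing integers $M_r(k)=n^{R-r}\sigma_r^{-1}(k)$ is a tidy way to decouple the combinatorics from the choice of the $\gamma_r$'s. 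I checked the details: the forbidden-gap condition (differences $\leq \gamma_r-1$ or $\geq \gamma_r+2$) holds for all $r$ under your inductive bound $\gamma_r \geq 5dg_{r-1}$; the $N_{r'}(k)$ are nonnegative and sum to $d$; and the leading-term step $(\gamma_r+1)n^{R-r}$ strictly exceeds the correction's range $dg_{r-1}$, so the order of $E(r,\cdot)$ follows that of $M_r(\cdot)$. The paper's construction is more economical in $d$ (exponential in $R$ alone, not in $R\log n$), but both are equally valid.

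One small remark relevant only to the downstream use in \Cref{thm:main} (not to the proposition itself): with $g_0 = 1$, sequences $k$ with different values of $N_0(k)$ produce sets $A_k = \bigcup_i M^{\alpha_{k,i}}\cdot[M]$ of slightly different cardinalities, since a unit gap $\alpha_{k,i+1}-\alpha_{k,i}=1$ makes the two blocks share the endpoint $M^{\alpha_{k,i}+1}$ in addition to $0$. This is harmless for the asymptotics but would spoil the parenthetical ``of equal size'' in the theorem statement; it is fixed trivially by taking $g_0 = 2$, which you should note if you flesh this out.
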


\begin{proof}
For each $1 \leq r \leq R$, set $\gamma_r:=(10n)^{10r}$.  We will construct each sequence \eqref{eq:sequences} as a $2 \times \cdots \times 2$ generalized arithmetic progression with rapidly increasing side lengths.  For each $k$, let $\alpha_{k,0}<\alpha_{k,1}<\cdots<\alpha_{k,d}$ (with $d=2^R-1$) be the elements of the set
$$\sum_{s=1}^R \left\{0,\sigma_s(k) \cdot \frac{\gamma_s}{10n}\right\};$$
it is clear that consecutive elements of this sequence \eqref{eq:sequences} do not differ by $\gamma_r$ or $\gamma_r+1$.  Let us calculate $E(r,k)$.  The equivalence relation $\sim^{r,k}$ has $2^{R-r}$ equivalence classes $C$, each satisfying $$C_{\max}-C_{\min}=\frac{1}{10n}\sum_{s=1}^r \sigma_s(k) \gamma_s.$$
Thus we have
$$E(r,k)=2^{R-r}(\gamma_r+1)+
\frac{2^{R-r}}{10n}\sum_{s=1}^r \sigma_s(k) \gamma_s.$$
The $s=r$ term dominates the sum, so the expressions $E(k,r)$ have the desired relative orders.
\end{proof}

We can now deduce \Cref{thm:old}.

\begin{proof}[Proof of \Cref{thm:old}]
Take the parameters $d,\alpha_{k,i},\gamma_r$ as in \Cref{prop:main-construction}.  Let $M \in \mathbb{N}$ be sufficiently large (depending on $d$).  For each $1 \leq k \leq n$, define the set
$$A_k:=\bigcup_{i=0}^d M^{\alpha_{k,i}} \cdot [M],$$
and set
$$h_r:=M^{\gamma_r}$$
for each $1 \leq r \leq R$.  \Cref{lem:sumset-at-scales} tells us that each
$$|h_rA_k| \asymp_d M^{E(r,k)},$$
and \Cref{prop:main-construction} ensures that these quantities have the desired relative order for each $r$.  Notice that the sequence $h_1<\cdots <h_R$ can be taken to depend on only $n,R$ (and in particular to be independent of $\sigma_1, \ldots, \sigma_R$).
\end{proof}

\subsection{Comparison with \Cref{prop:zero}}

The constructions for \Cref{thm:old} and \Cref{prop:zero} both use unions of arithmetic progressions at different scales.  The mechanisms underlying these two construction are quite different, however.

In the construction for \Cref{thm:old}, as one takes higher-order iterated sumsets, the arithmetic progressions ``merge'' in pairs, then in quadruples, and so on, until a sufficiently high-order iterated sumset consists of a single long interval.   This is a fundamentally ``$1$-dimensional'' phenomenon.  Each merging corresponds to a (relative) slow-down in the growth rate of the iterated sumsets; the exact timing of these mergings causes the desired fluctuations in the relative sizes of iterated sumsets.  This approach requires some (rough) quantitative estimates on the sizes of iterated sumsets to ensure that the main fluctuations are larger than accumulated error terms; it is for the sake of this balancing act that the sequence of $h$'s grows very quickly.

In the construction for \Cref{prop:zero}, by contrast, each arithmetic progression is involved in only one merging (as described in \Cref{sec:zero}).  Each merging again causes a fluctuation in the relative sizes of the iterated sumsets, and the product structure of the construction (embedded in $\mathbb{Z}$ by means of different scales) ensures that these different pieces remain completely independent.  This is a fundamentally ``high-dimensional'' phenomenon.  Independence makes the analysis correspondingly ``softer'' in the sense of not requiring \emph{quantitative} estimates on the relative sizes of iterated sumsets; it is for this reason that we may prescribe the sequence of $h$'s.

The advantage of the latter approach is (obviously) that its greater flexibility allows us to prove stronger results.  The principle of the construction is sufficiently general that it also works in the positive-characteristic setting with only minor modifications (\Cref{prop:positive}).  The interest of the former approach is that it directly harnesses the diversity of scales available in the integers, rather than ``cheating'' by embedding a higher-dimensional object.  We are optimistic that these ideas will find applications in other problems.

\section*{Acknowldgements}
The author was supported in part by the NSF Graduate Research Fellowship Program under
grant DGE–203965.  I thank Noga Alon and Jacob Fox for helpful comments.

\end{document}